\newtheorem{theorem}{T{\hskip 0pt\footnotesize\bf HEOREM}}[section]
\newtheorem{lemma}[theorem]{L{\hskip 0pt\footnotesize\bf EMMA}}
\begin{document}

\title{\Large\bf A rational approximation of the two-term Machin-like formula for $\pi$}

%\bigskip
\author{
\normalsize\bf Sanjar M. Abrarov, Rehan Siddiqui, Rajinder Kumar Jagpal \\
\normalsize\bf and Brendan M. Quine}

\date{July 23, 2024}
\maketitle
%\vspace{1cm}%\bigskip

\begin{abstract}
In this work, we consider the properties of the two-term Machin-like formula and develop an algorithm for computing digits of $\pi$ by using its rational approximation. In this approximation, both terms are constructed by using a representation of $1/\pi$ in the binary form. This approach provides the squared convergence in computing digits of $\pi$ without any trigonometric functions and surd numbers. The Mathematica codes showing some examples are presented.
\vspace{0.2cm}
\\
\noindent {\bf Keywords:} constant pi; iteration; nested radicals; rational approximation
\\
\end{abstract}

\section{Preliminaries}

In 1876, English astronomer and mathematician John Machin demonstrated an efficient method to compute digits of $\pi$ by using his famous discovery \cite{Beckmann1971,Berggren2004,Borwein2008,Agarwal2013}
\begin{equation}\label{OMF} % original Machin's formula
\frac{\pi}{4} = 4\arctan\left(\frac{1}{5}\right) - \arctan\left(\frac{1}{239}\right).
\end{equation}
In particular, due to relatively rapid convergence of this formula, he was the first to compute more than $100$ digits of $\pi$. Nowadays, the equations of kind
\[
\frac{\pi}{4} = \sum_{j = 1}^J {A_j\arctan\left(\frac{1}{B_j}\right)},
\]
where $A_j$ and $B_j$ are either integers or rational numbers, are named after him as the Machin-like formulas for $\pi$ \cite{Beckmann1971,Berggren2004,Borwein2008,Agarwal2013}.

Theorem~\ref{Theorem1.1} below shows the arctangent formula \eqref{AF} for $\pi$. We can use this equation as a starting point to generate the-two term Machin-like formula for $\pi$ of kind
\[
\frac{\pi}{4} = 2^{k - 1}\arctan\left(\frac{1}{x}\right) + \arctan\left(\frac{1}{y}\right),
\]
where $k,x \in \mathbb{N}$ and $y \in \mathbb{Q}$. Further, we will show the significance of the multiplier $2^{k - 1}$ in this formula.

\begin{theorem}\label{Theorem1.1}
There is a formula for $\pi$ at any integer $k \ge 1$ \cite{Abrarov2018}:
\begin{equation}\label{AF} % arctangent formula
\frac{\pi}{4} = 2^{k - 1}\arctan\left(\frac{\sqrt{2 - a_{k - 1}}}{a_k}\right),
\end{equation}
where $a_0 = 0$ and $a_k = \sqrt{2 + a_{k - 1}}$ are nested radicals.
\end{theorem}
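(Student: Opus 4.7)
The plan is to recognize the quantity inside the arctangent as $\tan(\pi/2^{k+1})$. Once that identification is made, the identity is immediate:
\[
2^{k-1}\arctan\!\left(\tan\frac{\pi}{2^{k+1}}\right) = 2^{k-1}\cdot\frac{\pi}{2^{k+1}} = \frac{\pi}{4},
\]
and the argument does lie in the principal range of arctangent for $k\ge 1$ because $\tan(\pi/2^{k+1})\in(0,1]$.

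The central lemma to establish, by induction on $k$, is
\[
\cos\!\left(\frac{\pi}{2^{k+1}}\right)=\frac{a_k}{2},\qquad k\ge 1.
\]
The base case $k=1$ is the familiar $\cos(\pi/4)=\sqrt{2}/2=a_1/2$. For the inductive step I would apply the half-angle identity $\cos(\theta/2)=\sqrt{(1+\cos\theta)/2}$ with $\theta=\pi/2^{k+1}$, obtaining
\[
\cos\!\left(\frac{\pi}{2^{k+2}}\right)=\sqrt{\frac{1+a_k/2}{2}}=\frac{\sqrt{2+a_k}}{2}=\frac{a_{k+1}}{2},
\]
where the last equality is exactly the defining recurrence $a_{k+1}=\sqrt{2+a_k}$. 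A simple side induction gives $0\le a_k<2$, which keeps all square roots on their principal (positive) branches.

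Given the lemma, the Pythagorean identity produces $\sin(\pi/2^{k+1})=\sqrt{4-a_k^2}/2$, and since $a_k^2=2+a_{k-1}$ by definition, this collapses to $\sin(\pi/2^{k+1})=\sqrt{2-a_{k-1}}/2$. Dividing sine by cosine,
\[
\tan\!\left(\frac{\pi}{2^{k+1}}\right)=\frac{\sqrt{2-a_{k-1}}}{a_k},
\]
so $\arctan(\sqrt{2-a_{k-1}}/a_k)=\pi/2^{k+1}$, and multiplication by $2^{k-1}$ yields \eqref{AF}.

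I do not expect a serious obstacle: the argument is essentially the classical Viète-style derivation of nested-radical expressions for $\cos(\pi/2^n)$, repackaged into the form of the theorem. The only item requiring any care is keeping track of the positive branches of the various square roots, and this is handled uniformly by the bound $0\le a_k<2$ established at the beginning.
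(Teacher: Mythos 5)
Your proposal is correct and follows essentially the same route as the paper: both establish $\cos(\pi/2^{k+1}) = a_k/2$ by induction (your half-angle identity is just the paper's doubling identity $\cos(2x)=2\cos^2 x - 1$ read in reverse), then use the Pythagorean identity to get $\sin(\pi/2^{k+1})=\sqrt{2-a_{k-1}}/2$ and form the tangent. Your version is marginally more careful in that it records the bound $0\le a_k<2$ and checks that the arctangent argument lies in the principal range, details the paper leaves implicit.
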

\begin{proof}
Since
\[
\cos\left(\frac{\pi}{2^2}\right) = \frac{1}{2}\sqrt{2} = \frac{1}{2}{a_1},
\]
from the identity
\[
\cos\left(2x\right) = 2\cos^2\left(x\right) - 1,
\]
it follows, by induction, that
\[
\cos\left(\frac{\pi}{2^3}\right) = \frac{1}{2}\sqrt{2 + \sqrt{2}} = \frac{1}{2}{a_2},
\]
\[
\cos\left(\frac{\pi}{2^4}\right) = \frac{1}{2}\sqrt{2 + \sqrt{2 + \sqrt{2}}}  = \frac{1}{2}{a_3},
\]
\[
\cos\left(\frac{\pi}{2^5}\right) = \frac{1}{2}\sqrt{2 + \sqrt{2 + \sqrt{2 + \sqrt{2}}}}  = \frac{1}{2}{a_4},
\]
\[
\vdots\\
\]
\[
\cos\left(\frac{\pi}{2^{k + 1}}\right) = \frac{1}{2}\underbrace{\sqrt{2 + \sqrt{2 + \sqrt{2 + \cdots + \sqrt{2}}}}}_{k\,\,{\rm square\,\,roots}} = \frac{1}{2}{a_k}.
\]
Therefore, using
\[
\sin\left(\frac{\pi}{2^{k + 1}}\right) = \sqrt{1 - \cos^2\left(\frac{\pi}{2^{k + 1}}\right)},
\]
we obtain
\[
\frac{\pi}{2^{k + 1}} = \arctan\left(\frac{\sqrt{1 - \cos^2\left(\frac{\pi}{2^{k + 1}}\right)}}{\cos\left(\frac{\pi}{2^{k + 1}}\right)}\right) = \arctan\left(\frac{\sqrt{2 - a_{k - 1}}}{a_k}\right)
\]
and equation \eqref{AF} follows.
\end{proof}

Lemma~\ref{Lemma1.2} and its proof below show how equation \eqref{AF} is related to the well-known limit \eqref{SRF} for $\pi$.
\begin{lemma}\label{Lemma1.2}
There is a limit such that \cite{Servi2003}
\begin{equation}\label{SRF} % square roots formula
\pi  = \lim_{k\to\infty} 2^k\sqrt{2 - \underbrace{\sqrt{2 + \sqrt{2 + \sqrt{2 + \cdots + \sqrt{2}}}}}_{k - 1\,\,{\rm square\,\,roots}}}.
\end{equation}
\end{lemma}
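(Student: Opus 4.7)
The plan is to derive the limit directly from Theorem~\ref{Theorem1.1}. Rearranging \eqref{AF} gives
\[
\pi = 2^{k+1}\arctan\left(\frac{\sqrt{2 - a_{k-1}}}{a_k}\right),
\]
which holds for every integer $k\ge 1$. Since this identity is exact, it suffices to take the limit of the right-hand side as $k\to\infty$ and recognize the nested-radical expression in \eqref{SRF}.

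First, I would verify that the argument of the arctangent tends to $0$. The proof of Theorem~\ref{Theorem1.1} shows $a_k = 2\cos(\pi/2^{k+1})$, so $a_k\to 2$ and hence $\sqrt{2-a_{k-1}}\to 0$ monotonically, while $a_k\to 2$ remains bounded away from $0$. Consequently
\[
t_k := \frac{\sqrt{2-a_{k-1}}}{a_k}\longrightarrow 0.
\]

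Next, I would invoke the standard limit $\arctan(t)/t \to 1$ as $t\to 0$. Writing
\[
2^{k+1}\arctan(t_k) = 2^{k+1}\, t_k \cdot \frac{\arctan(t_k)}{t_k} = \frac{2^{k+1}}{a_k}\sqrt{2-a_{k-1}}\cdot\frac{\arctan(t_k)}{t_k},
\]
and using $a_k\to 2$ together with $\arctan(t_k)/t_k\to 1$, I would conclude
\[
\pi = \lim_{k\to\infty} 2^{k+1}\arctan(t_k) = \lim_{k\to\infty} 2^k \sqrt{2-a_{k-1}},
\]
which, upon expanding $a_{k-1}$ as a nested radical with $k-1$ square roots, is exactly \eqref{SRF}.

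There is no real obstacle here; the only point requiring a bit of care is confirming that $\arctan$ can be replaced by its argument in the limit, which is justified because $t_k\to 0$ and the prefactor $2^{k+1}t_k$ stays bounded (indeed converges to $\pi$, as the identity itself guarantees). The whole proof is therefore a short corollary of Theorem~\ref{Theorem1.1} combined with $\lim_{t\to 0}\arctan(t)/t = 1$.
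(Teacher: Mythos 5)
Your proof is correct and follows essentially the same route as the paper: both start from the exact identity \eqref{AF}, replace the arctangent by its argument via $\lim_{t\to 0}\arctan(t)/t=1$, and use $a_k\to 2$ to absorb the denominator. The only (minor) difference is that you justify $a_k\to 2$ from the closed form $a_k=2\cos\left(\pi/2^{k+1}\right)$ established in the proof of Theorem~\ref{Theorem1.1}, whereas the paper uses a fixed-point argument $x=\sqrt{2+x}$; your variant is if anything slightly cleaner, since it does not presuppose that the limit of $a_k$ exists.
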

\begin{proof}
Let
\[
\lim_{k\to\infty} a_k = x.
\]
Then, we can write
\[
\lim_{k\to\infty} a_k = \lim_{k\to\infty}\sqrt{2 + a_{k - 1}} = \sqrt{2 + \lim_{k\to\infty}{a_k}} 
\]
or
\[
x = \sqrt{2 + x}
\]
or 
\[
{x^2} - x - 2 = 0.
\]
Solving this quadratic equation leads to two solutions $x = 2$ and $x =  - 1$. Since $a_k$ cannot be negative, we came to conclusion that
\[
\lim_{k\to\infty} a_k = 2.
\]
Consequently, this gives
\[
\lim_{k\to\infty}{\sqrt{2 - a_{k - 1}}} = \sqrt{2 - \lim_{k\to\infty}a_{k - 1}} = \sqrt{2 - \lim_{k\to\infty}{a_k}} = 0.
\]

Since the formula \eqref{AF} is valid for any arbitrarily large $k$, we can write
\[
\frac{\pi}{4} = \lim_{k\to\infty} 2^{k - 1}\arctan\left(\frac{\sqrt{2 - a_{k - 1}}}{a_k}\right)
\]

As $\arctan\left(x\right)/x \to 1$ when $x\to 0$, we have
\[
\begin{aligned}
\frac{\pi}{4} &= \lim_{k\to\infty} 2^{k - 1}\frac{\sqrt{2 - a_{k - 1}}}{a_k} = \lim_{k\to\infty} {2^{k - 1}}\sqrt{2 - a_{k - 1}}/\lim_{k\to\infty}a_k \\
&= \lim_{k\to\infty}2^{k - 2}\sqrt{2 - a_{k - 1}} = \lim_{k\to\infty} 2^{k - 1}\sqrt{2 - a_k}
\end{aligned}
\]
and equation \eqref{SRF} follows. This completes the proof.
\end{proof}

Lemma~\ref{Lemma1.3} and its proof below show how equation \eqref{AF} can be transformed into the two-term Machin-like formula for $\pi$ \cite{Abrarov2017}.
\begin{lemma}\label{Lemma1.3}
In the following equation
\begin{equation}\label{TTMLF1} % two-term Machin-like formula 1
\frac{\pi}{4} = 2^{k - 1}\arctan\left(\frac{1}{\left\lfloor{a_k/\sqrt{2 - a_{k - 1}}}\right\rfloor}\right) + \arctan \left(\frac{1}{\beta_k}\right),
\end{equation}
the value $\beta_k$ is always a rational number.
\end{lemma}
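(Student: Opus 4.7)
\medskip

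\noindent\textbf{Proof plan.} The plan is to solve \eqref{TTMLF1} for $\beta_k$ using the tangent subtraction formula and then to show rationality by induction on the doubling. Starting from \eqref{TTMLF1}, rearrange to isolate $\arctan(1/\beta_k)$ and apply $\tan$ to both sides. With the abbreviation $x_k := \left\lfloor a_k/\sqrt{2 - a_{k-1}}\right\rfloor \in \mathbb{N}$ and $\theta_k := 2^{k-1}\arctan(1/x_k)$, using $\tan(\pi/4) = 1$ together with the subtraction formula for tangent gives
\[
\frac{1}{\beta_k} \;=\; \tan\!\left(\frac{\pi}{4} - \theta_k\right) \;=\; \frac{1 - \tan\theta_k}{1 + \tan\theta_k}.
\]
Thus the whole problem reduces to showing that $\tan\theta_k = \tan\!\left(2^{k-1}\arctan(1/x_k)\right)$ is a rational number.

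\medskip

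\noindent The second step is a straightforward induction on $j$ proving that $T_j := \tan\!\left(2^{j}\arctan(1/x_k)\right) \in \mathbb{Q}$ for all $j \ge 0$. The base case $T_0 = 1/x_k$ is rational by construction. For the inductive step I would invoke the double-angle identity
\[
T_{j+1} \;=\; \tan\!\left(2\cdot 2^{j}\arctan(1/x_k)\right) \;=\; \frac{2\,T_j}{1 - T_j^{2}},
\]
which manifestly lies in $\mathbb{Q}$ whenever $T_j$ does, provided $T_j \neq \pm 1$. Iterating $k-1$ times produces $T_{k-1} = \tan\theta_k \in \mathbb{Q}$, and substituting back into the formula for $1/\beta_k$ displayed above yields $\beta_k \in \mathbb{Q}$.

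\medskip

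\noindent The main obstacle is the technical caveat $T_j \neq \pm 1$ at each doubling, i.e.\ that none of the intermediate denominators vanish and that the final denominator $1+\tan\theta_k$ is nonzero (so that $\beta_k$ is a well-defined finite rational and not forced to be $\infty$). I would handle this by exploiting the construction of $x_k$ as a floor: since $1/x_k$ is strictly smaller than the exact value $\sqrt{2-a_{k-1}}/a_k$ appearing in \eqref{AF}, we have $\theta_k = 2^{k-1}\arctan(1/x_k) < \pi/4$ strictly, so $\tan\theta_k \in (0,1)$ and in particular $\tan\theta_k \neq \pm 1$ and $1+\tan\theta_k \neq 0$. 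Moreover, for intermediate $j<k-1$, $2^j\arctan(1/x_k) < \pi/8 < \pi/4$, so each $T_j$ is likewise bounded away from $\pm 1$. This guarantees the induction goes through without division by zero, and the rationality of $\beta_k$ follows.
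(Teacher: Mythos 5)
Your overall strategy --- isolate $\arctan(1/\beta_k)$, apply the tangent subtraction formula, and propagate rationality through $k-1$ applications of the double-angle identity --- is sound and genuinely more elementary than the paper's argument, which instead exhibits the closed form $\beta_k = \frac{2}{\left(\frac{\alpha_k+i}{\alpha_k-i}\right)^{2^{k-1}} - i} - i$, observes that the real and imaginary parts of that Gaussian-rational power are rational, and then argues that the imaginary part of $\beta_k$ must vanish. However, the way you dispose of what you correctly identify as the main obstacle contains a genuine error: the floor inequality runs the other way. Writing $y = a_k/\sqrt{2-a_{k-1}}$, you have $x_k = \lfloor y\rfloor \le y$, hence $1/x_k \ge 1/y$ and $\theta_k = 2^{k-1}\arctan(1/x_k) \ge 2^{k-1}\arctan(1/y) = \pi/4$, not $\theta_k < \pi/4$. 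Consequently $\tan\theta_k \ge 1$ rather than $\tan\theta_k\in(0,1)$. This is visible in every example in the paper: the second term is always subtracted ($-\arctan(1/7)$ at $k=2$, $-\arctan(1/239)$ at $k=3$), i.e.\ $1/\beta_k = \frac{1-\tan\theta_k}{1+\tan\theta_k}\le 0$, which is impossible under your claimed bound.

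The argument is repairable, but with a different estimate. One shows $\theta_k<\pi/2$: for $y>1$, $\tan\!\left(2\arctan(1/y)\right)=\frac{2y}{y^2-1}>\frac{1}{y-1}\ge\frac{1}{\lfloor y\rfloor}$, so $\arctan(1/\lfloor y\rfloor)<2\arctan(1/y)$ and therefore $\theta_k<2^{k}\arctan(1/y)=\pi/2$. Then every intermediate angle satisfies $2^{j}\arctan(1/x_k)\le\theta_k/2<\pi/4$ for $j\le k-2$, so each intermediate $T_j$ lies in $(0,1)$ and no doubling divides by zero, while at the last step $1+\tan\theta_k\ge 2>0$. (Your claimed intermediate bound $<\pi/8$ is also false --- the $j=k-2$ angle equals $\theta_k/2\ge\pi/8$ --- but only $<\pi/4$ is needed.) There remains the degenerate possibility $\tan\theta_k=1$, i.e.\ $y\in\mathbb{Z}$, which forces $1/\beta_k=0$; this actually occurs at $k=1$ and is not excluded by the paper's proof either.
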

\begin{proof}
It is convenient to define
\begin{equation}\label{AK} % alpha k
\alpha_k = \left\lfloor{a_k/\sqrt{2 - a_{k - 1}}}\right\rfloor.
\end{equation}
Using the identity
\[
\arctan\left(\frac{1}{x}\right) = \frac{1}{2i}\ln\left(\frac{x + i}{x - i}\right)
\]
and taking into consideration that
\[
\frac{\pi}{4} = \arctan\left(1\right),
\]
equation \eqref{TTMLF1} can be represented as
\[
\frac{\pi}{2}i = \ln\left(\frac{\alpha_k + i}{\alpha_k - i}\right)^{2^{k - 1}}\ln\left(\frac{\beta_k + i}{\beta_k - i}\right)
\]
or
\begin{equation}\label{R4I} % relation for i
i = \left(\frac{\alpha_k + i}{\alpha_k - i}\right)^{2^{k - 1}}\left(\frac{\beta_k + i}{\beta_k - i}\right).
\end{equation}

It is not difficult to see by substitution that the following formula \cite{Abrarov2017}
\begin{equation}\label{F4B1} % formula for beta 1
\beta_k = \frac{2}{\left(\frac{\alpha_k + i}{\alpha _k - i}\right)^{2^{k - 1}} - i} - i
\end{equation}
is a solution of the equation \eqref{R4I}. Since $k$ is a positive integer greater than or equal to $1$, we can see that the real and imaginary parts of the expression
\[
\left(\frac{\alpha_k + i}{\alpha_k - i}\right)^{2^{k - 1}} = \left(\frac{\alpha_k^2 - 1}{\alpha_k^2 + 1} + i\frac{2\alpha_k}{\alpha_k^2 + 1}\right)^{2^{k - 1}}
\]
cannot be an irrational number if $\alpha_k$ is an integer. This means that the real and imaginary parts of the value $\beta_k$ must be both rational. Since the first arctangent term of equation \eqref{TTMLF1} is a real number, the second arctangent term is also a real number. Therefore, we conclude that the imaginary part of the value $\beta_k$ is equal to zero. This completes the proof.
\end{proof}

This is not the only method to generate the two-term Machin-like formulas for $\pi$ of kind \eqref{TTMLF1}. Recently, Gasull {\it et al}. proposed a different method to derive the same equation (see Table 2 in \cite{Gasull2023}). Lemma~\ref{Lemma1.4} and its proof below show how the equation \eqref{TTMLF1} can be represented in a trigonometric form \cite{Abrarov2017}.

\begin{lemma}\label{Lemma1.4}
Equation \eqref{TTMLF1} can be expressed as
\begin{equation}\label{TTMLF2} % two-term Machin-like formula 2
\frac{\pi}{4} = 2^{k - 1}\arctan\left(\frac{1}{\alpha_k}\right) + \arctan\left(\frac{1 - \sin\left(2^{k - 1}\arctan\left(\frac{2\alpha_k}{\alpha_k^2 - 1}\right)\right)}{\cos\left(2^{k - 1}\arctan\left(\frac{2\alpha_k}{\alpha _k^2 - 1}\right) \right)}\right)
\end{equation}
\end{lemma}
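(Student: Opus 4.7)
The plan is to start from the conclusion of Lemma~\ref{Lemma1.3}, namely
\[
\frac{\pi}{4} = 2^{k-1}\arctan\!\left(\frac{1}{\alpha_k}\right) + \arctan\!\left(\frac{1}{\beta_k}\right),
\]
with $\beta_k$ given by the closed form \eqref{F4B1}, and to rewrite $1/\beta_k$ in the exact trigonometric shape that appears in \eqref{TTMLF2}. The first arctangent is already in the desired form, so the only work is to transform the second.

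The key observation is that the complex number $(\alpha_k+i)/(\alpha_k-i)$ has modulus $1$. Indeed, multiplying top and bottom by $\alpha_k+i$ shows
\[
\frac{\alpha_k+i}{\alpha_k-i} = \frac{\alpha_k^2-1}{\alpha_k^2+1} + i\,\frac{2\alpha_k}{\alpha_k^2+1},
\]
so its argument is $\theta := \arctan\!\bigl(2\alpha_k/(\alpha_k^2-1)\bigr)$. Setting $\phi := 2^{k-1}\theta$, De Moivre's formula gives
\[
\left(\frac{\alpha_k+i}{\alpha_k-i}\right)^{2^{k-1}} = \cos\phi + i\sin\phi.
\]
First I would substitute this identity into \eqref{F4B1}, rewriting $\beta_k$ as
\[
\beta_k = \frac{2}{\cos\phi + i(\sin\phi-1)} - i.
\]

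Next I would clear the complex denominator by multiplying by its conjugate. The denominator then becomes $\cos^2\phi+(\sin\phi-1)^2 = 2-2\sin\phi$, which (this is the one pleasant cancellation to watch out for) reduces the fraction to $\cos\phi/(1-\sin\phi) + i$. Subtracting $i$ yields the remarkably clean expression
\[
\beta_k = \frac{\cos\phi}{1-\sin\phi},
\]
so that $1/\beta_k = (1-\sin\phi)/\cos\phi$. Plugging back in and unfolding the definition of $\phi$ gives precisely the second arctangent in \eqref{TTMLF2}, completing the derivation.

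I do not expect a genuine obstacle here: the argument is essentially a direct calculation once one recognizes that $(\alpha_k+i)/(\alpha_k-i)$ lies on the unit circle. The only points that require mild care are the algebraic simplification $\cos^2\phi+(\sin\phi-1)^2 = 2(1-\sin\phi)$ and tracking that the principal branch of $\arctan$ is consistent with the branch implicit in Lemma~\ref{Lemma1.3}. Since $\alpha_k\ge 1$ in every case relevant to \eqref{TTMLF1} and the equality of the two expressions for $\beta_k$ is algebraic rather than branch-sensitive, this is not a serious issue.
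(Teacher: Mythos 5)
Your proposal is correct and follows essentially the same route as the paper: both substitute the polar form $\left(\frac{\alpha_k+i}{\alpha_k-i}\right)^{2^{k-1}}=\cos\phi+i\sin\phi$ (using that $\kappa_1^2+\lambda_1^2=1$ and de Moivre) into \eqref{F4B1}, simplify to $\beta_k=\cos\phi/(1-\sin\phi)$ with $\phi=2^{k-1}\arctan\bigl(2\alpha_k/(\alpha_k^2-1)\bigr)$, and read off \eqref{TTMLF2}. Your version merely makes explicit the conjugate-multiplication step that the paper leaves implicit.
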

\begin{proof}
Define
\[
\kappa_1 = \frac{\alpha_k^2 - 1}{\alpha_k^2 + 1}
\]
and
\[
\lambda_1 = \frac{2\alpha _k}{\alpha_k^2 + 1}
\]
such that
\begin{equation}\label{F4B2} % formula for beta 2
\beta_k = \frac{2}{\left(\kappa_1 + i\lambda_1\right)^{2^{k - 1}} - i} - i,
\end{equation}
in accordance with equation \eqref{F4B1}. Then, using de Moivre's formula we can write the complex number in polar form as
\[
\begin{aligned}
\left(\kappa_1 + i\lambda_1\right)^{2^{k - 1}} =& \left(\kappa_1^2 + \lambda_1^2\right)^{2^{k - 2}}\left(\cos\left(2^{k - 1}{\rm Arg}\left(\kappa_1 + i\lambda_1\right)\right)\right) \\
&+ i\sin\left(2^{k - 1}{\rm Arg}\left(\kappa_1 + i\lambda_1\right)\right).
\end{aligned}
\]
Thus, substituting this equation into equation \eqref{F4B2}, we obtain
\begin{equation}\label{F4B3} % formula for beta 3
\beta_k = \frac{\cos\left(2^{k - 1}{\rm Arg}\left(\kappa_1 + i\lambda_1\right)\right)}{1 - \sin\left(2^{k - 1}{\rm Arg}\left(\kappa_1 + i\lambda_1\right)\right)} = \frac{\cos\left(2^{k - 1}{\rm Arg}\left(\frac{\alpha_k + i}{\alpha_k - i}\right)\right)}{1 - \sin\left(2^{k - 1}{\rm Arg}\left( \frac{\alpha_k + i}{\alpha_k - i}\right)\right)}.
\end{equation}

Using the relation
\[
{\rm Arg}\left(x + iy\right) = \arctan\left(\frac{y}{x}\right), \qquad x > 0,
\]
we can write
\begin{equation}\label{I4AF} % identity for argument function
{\rm Arg}\left(\kappa_1 + i\lambda_1\right) = \arctan\left(\frac{2\alpha_k}{\alpha_k^2 - 1}\right).
\end{equation}
Consequently, from the identities \eqref{F4B3} and \eqref{I4AF}, we get
\begin{equation}\label{F4B4} % formula for beta 4
\beta_k = \frac{\cos\left(2^{k - 1}\arctan\left(\frac{2\alpha_k}{\alpha_k^2 - 1}\right)\right)}{1 - \sin\left(2^{k - 1}\arctan\left(\frac{2\alpha_k}{\alpha_k^2 - 1}\right)\right)}
\end{equation}
and equation \eqref{TTMLF2} follows.
\end{proof}

It should be noted that computation of the constant $\beta_k$ by using equation \eqref{F4B1} is not optimal. Specifically, at lager values of the integer $k$ its application slows down the computation. Application of the equation \eqref{F4B4} for computation of the constant $\beta_k$ is also not desirable due to presence of the trigonometric functions. Theorem~\ref{Theorem1.5} and its proof show how this problem can be effectively resolved \cite{Abrarov2017}.
\begin{theorem}\label{Theorem1.5}
The rational number $\beta_k$ is given by
\begin{equation}\label{F4B5} % formula for beta 5
\beta_k = \frac{\kappa_k}{1 - \lambda_k},
\end{equation}
where the coefficients ${\kappa_k}$ and ${\lambda_k}$ can be found by a two-step iteration
\begin{equation}\label{TSI} % two-step iteration
\left\{
\begin{aligned}
\kappa_n = \kappa_{n - 1}^2 - \lambda_{n - 1}^2 \\
\lambda_n = 2\kappa_{n - 1}\lambda_{n - 1},
\end{aligned}
\right. \qquad n = 2,3,4,\ldots,k
\end{equation}
with initial values
\[
\kappa_1 = \frac{\alpha_k^2 - 1}{\alpha_k^2 + 1}
\]
and
\[
\lambda_1 = \frac{2\alpha_k}{\alpha_k^2 + 1}.
\]
\end{theorem}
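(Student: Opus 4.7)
The plan is to recognize the two-step iteration \eqref{TSI} as the real/imaginary-part decomposition of repeated squaring in $\mathbb{C}$. Concretely, if one writes $z_n = \kappa_n + i\lambda_n$, then $z_n = z_{n-1}^{2}$, because
\[
(\kappa_{n-1} + i\lambda_{n-1})^{2} = (\kappa_{n-1}^{2} - \lambda_{n-1}^{2}) + i(2\kappa_{n-1}\lambda_{n-1}).
\]
So I would first prove, by a one-line induction, that
\[
\kappa_n + i\lambda_n \;=\; (\kappa_1 + i\lambda_1)^{2^{\,n-1}}, \qquad n = 1,2,\ldots,k.
\]
This identifies $\kappa_k + i\lambda_k$ with the quantity that already appears in equation \eqref{F4B2}, namely $(\kappa_1 + i\lambda_1)^{2^{k-1}}$.

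Next I would record the key normalization $\kappa_1^{2} + \lambda_1^{2} = 1$, which is immediate from
\[
\kappa_1^{2} + \lambda_1^{2} = \frac{(\alpha_k^{2}-1)^{2} + 4\alpha_k^{2}}{(\alpha_k^{2}+1)^{2}} = \frac{(\alpha_k^{2}+1)^{2}}{(\alpha_k^{2}+1)^{2}} = 1.
\]
Since the modulus is preserved under squaring, the induction above gives $\kappa_k^{2} + \lambda_k^{2} = 1$ as well. This is the algebraic fact that will make the final simplification collapse cleanly.

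Then I would substitute $(\kappa_1 + i\lambda_1)^{2^{k-1}} = \kappa_k + i\lambda_k$ directly into the formula \eqref{F4B2} from Lemma~\ref{Lemma1.3}:
\[
\beta_k \;=\; \frac{2}{\kappa_k + i\lambda_k - i} - i \;=\; \frac{2}{\kappa_k - i(1-\lambda_k)} - i.
\]
Rationalizing by multiplying numerator and denominator by $\kappa_k + i(1-\lambda_k)$, the denominator becomes $\kappa_k^{2} + (1-\lambda_k)^{2} = 1 + 1 - 2\lambda_k = 2(1-\lambda_k)$ thanks to $\kappa_k^{2}+\lambda_k^{2}=1$. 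The resulting imaginary part is $+i$, which cancels the trailing $-i$, and the real part is exactly $\kappa_k/(1-\lambda_k)$, establishing \eqref{F4B5}.

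The only genuine obstacle is conceptual rather than computational: one must notice that the seemingly ad hoc recursion \eqref{TSI} is nothing but de Moivre's formula in Cartesian coordinates applied along the dyadic exponent $2^{k-1}$. Once that observation is made, the remaining work is routine verification: the induction step is a single complex multiplication, the unit-modulus check is a short algebraic identity, and the final rationalization relies on the Pythagorean relation $\kappa_k^{2}+\lambda_k^{2}=1$. A brief sanity check against the trigonometric expression \eqref{F4B3} is possible (since $\kappa_k$ and $\lambda_k$ coincide with $\cos$ and $\sin$ of $2^{k-1}\mathrm{Arg}(\kappa_1+i\lambda_1)$), but is not needed for the proof.
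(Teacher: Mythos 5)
Your proof is correct, and its first half --- recognizing \eqref{TSI} as repeated complex squaring so that $\kappa_k+i\lambda_k=(\kappa_1+i\lambda_1)^{2^{k-1}}$, then substituting into \eqref{F4B2} --- is exactly what the paper does. Where you genuinely diverge is in how the denominator $\kappa_k^2+(1-\lambda_k)^2$ gets collapsed to $2(1-\lambda_k)$. The paper obtains this identity by \emph{invoking} Lemma~\ref{Lemma1.3}: since $\beta_k$ is already known to be real, the imaginary part of the rationalized expression must vanish, and setting it to zero yields $\kappa_k^2+(\lambda_k-1)^2=2(1-\lambda_k)$, which is then fed back into the real part. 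You instead prove the identity directly from the normalization $\kappa_1^2+\lambda_1^2=1$ and the fact that squaring preserves the modulus, so $\kappa_k^2+\lambda_k^2=1$ and the Pythagorean expansion does the rest. Your route is self-contained: it does not depend on Lemma~\ref{Lemma1.3} at all, and as a byproduct it re-derives the vanishing of the imaginary part (the $+i$ cancelling the trailing $-i$) rather than assuming it, which is arguably cleaner. The paper's route buys a slightly shorter computation at the cost of an external dependency. One small point common to both arguments: the division by $1-\lambda_k$ tacitly assumes $\lambda_k\neq 1$ (equivalently, that the denominator of \eqref{F4B2} is nonzero); neither you nor the paper addresses this degenerate case, so it is not a gap relative to the paper's own standard of rigor, but it would be worth a remark.
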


\begin{proof}
We notice that the following power reduction
\[
\begin{aligned}
\left(\kappa_1 + i\lambda_1\right)^{2k - 1} &= \overbrace{\left(\left(\left(\kappa_1 + i\lambda_1\right)^2\right)^{2\,\,\cdots} \right)^2}^{{k - 1}\,\,{\rm powers\,\,of\,\,2}} = \overbrace{\left(\left(\left(\kappa_2 + i\lambda_2 \right)^2\right)^{2\,\,\cdots}\right)^2}^{{k - 2}\,\,{\rm powers\,\,of\,\,2}} \\
&= \overbrace{\left(\left(\left(\kappa_3 + i\lambda_3\right)^2\right)^{2\,\,\cdots}\right)^2}^{{k - 3}\,\,{\rm powers\,\,of\,\,2}} = \overbrace{\left(\left(\left(\kappa_n + i\lambda_n\right)^2\right)^{2\,\,\cdots}\right)^2}^{{k - n}\,\,{\rm powers\,\,of\,\,2}} \\
&= \left(\left(\kappa_{k - 2} + i\lambda_{k - 2}\right)^2\right)^2 = \left(\kappa_{k - 1} + i\lambda_{k - 1}\right)^2 = \kappa_k + i\lambda_k,
\end{aligned}
\]
where the numbers $\kappa_n$ and $\lambda_n$ can be found by two-step iteration \eqref{TSI}, leads to
\[
\beta_k = \frac{2}{\kappa_k - i\lambda_k - i} - i = \frac{2\kappa_k}{\kappa_k^2 + \left(\lambda_k - 1\right)^2} + i\left(\frac{2\left(1 - \lambda_k\right)}{\kappa_k^2 + \left(\lambda_k - 1\right)^2} - 1\right),
\]
according to the equation \eqref{F4B1}.

From the Lemma~\ref{Lemma1.3} it follows that the imaginary part of the value $\beta_k$ is equal to zero. Consequently, the equation above can be simplified as
\begin{equation}\label{F4B6} % formula for beta 6
\beta_k = \frac{2\kappa_k}{\kappa_k^2 + \left(\lambda_k - 1\right)^2}.
\end{equation}
However, since
\[
i\left(\frac{2\left(1 - \lambda_k \right)}{\kappa_k^2 + \left(\lambda_k - 1\right)^2} - 1\right) = 0
\]
it follows that
\[
\frac{2\left(1 - \lambda_k\right)}{\kappa_k^2 + \left(\lambda_k - 1\right)^2} = 1 \Leftrightarrow \kappa_k^2 + \left(\lambda_k - 1\right)^2 = 2\left(1 - \lambda_k\right).
\]
Substituting this result into the denominator of equation \eqref{F4B6}, we obtain equation \eqref{F4B5}. This completes the proof.
\end{proof}

There is an interesting relation between $\beta_k$ and nested radicals $a_{k - 1}$ and $a_k$. Specifically, comparing equation \eqref{TTMLF1} with equation \eqref{F4P2} from the Lemma~\ref{Lemma1.6} below, one can see that
\[
\arctan\left(\frac{1}{\beta_k}\right) = -2^{k - 1}\arctan\left(\frac{\left\{a_k/\sqrt{2 - {a_{k - 1}}} \right\}}{1 + \left\lfloor a_k/\sqrt{2 - a_{k - 1}} \right\rfloor\left( a_k/\sqrt{2 - a_{k - 1}}\right)}\right),
\]
where
\[
\left\{a_k/\sqrt{2 - a_{k - 1}}\right\} = a_k/\sqrt{2 - a_{k - 1}} - \left\lfloor a_k/\sqrt{2 - a_{k - 1}}\right\rfloor
\]
denotes the fractional part.
\begin{lemma}\label{Lemma1.6}
\[
\lim_{k\to\infty} \arctan\left(\frac{1}{\beta_k}\right) = 0.
\]
\end{lemma}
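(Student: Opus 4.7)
The plan is to solve the defining relation \eqref{TTMLF1} explicitly for $\arctan(1/\beta_k)$ using the exact identity \eqref{AF}, and then estimate the resulting expression. Writing $x_k := a_k/\sqrt{2 - a_{k-1}}$, so that $\alpha_k = \lfloor x_k\rfloor$ by \eqref{AK}, subtracting \eqref{AF} from \eqref{TTMLF1} gives
\[
\arctan\!\left(\frac{1}{\beta_k}\right) = 2^{k-1}\!\left[\arctan\!\left(\frac{1}{x_k}\right) - \arctan\!\left(\frac{1}{\alpha_k}\right)\right].
\]
Applying the standard subtraction formula $\arctan u - \arctan v = \arctan\!\big((u-v)/(1+uv)\big)$ with $u=1/x_k$ and $v=1/\alpha_k$ collapses this to
\[
\arctan\!\left(\frac{1}{\beta_k}\right) = -\,2^{k-1}\arctan\!\left(\frac{\{x_k\}}{1 + \alpha_k x_k}\right),
\]
which is exactly the identity displayed immediately above the Lemma.

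Next I would apply the elementary bound $\arctan y \le y$ for $y \ge 0$, together with $\{x_k\}<1$, to obtain
\[
\left|\arctan\!\left(\frac{1}{\beta_k}\right)\right| \le \frac{2^{k-1}}{\alpha_k x_k}.
\]
It then remains to show that $\alpha_k x_k$ grows at least like $4^k$. For this, I would invoke Lemma~\ref{Lemma1.2}, whose proof already pins down the two-sided asymptotic $\sqrt{2 - a_{k-1}} \sim \pi/2^{k}$: the limit $2^{k-1}\sqrt{2 - a_k} \to \pi/4$ forces $2^k\sqrt{2 - a_{k-1}}$ to lie between two positive constants for all large $k$. Since $a_k \to 2$, it follows that $x_k \sim 2^{k+1}/\pi$, and because $\alpha_k \ge x_k - 1$, also $\alpha_k \ge x_k/2$ eventually; thus $\alpha_k x_k \ge C\cdot 4^k$ for some constant $C>0$.

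Combining the two displayed bounds yields $|\arctan(1/\beta_k)| = O(2^{-k})$, which tends to zero as $k\to\infty$. The only step that needs any real care is the lower asymptotic on $\sqrt{2 - a_{k-1}}$, but this is essentially built into the proof of Lemma~\ref{Lemma1.2}; so the main obstacle is bookkeeping rather than any new idea.
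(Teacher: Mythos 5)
Your proposal is correct, and it follows essentially the same decomposition as the paper: both arguments reduce the second arctangent term to
\[
-\,2^{k-1}\arctan\left(\frac{\{x_k\}}{1+\alpha_k x_k}\right),
\]
the identity displayed just above the Lemma (the paper derives it via the addition identity \eqref{AI} applied to \eqref{AF}, you derive it by subtracting \eqref{AF} from \eqref{TTMLF1} and using the arctangent subtraction formula; these are the same computation). Where you genuinely diverge is in the concluding estimate, and there your version is the stronger one. The paper simply observes that the numerator $\{x_k\}$ is bounded by $1$ while the denominator $1+\alpha_k x_k$ tends to infinity and then asserts the limit is zero --- but that reasoning, taken literally, ignores the prefactor $2^{k-1}$, which also tends to infinity. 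Your proposal supplies exactly the missing quantitative input: from Lemma~\ref{Lemma1.2} one gets $\sqrt{2-a_{k-1}}\sim \pi/2^{k}$, hence $x_k\sim 2^{k+1}/\pi$ and $\alpha_k x_k\gtrsim 4^{k}$, so the whole expression is $O(2^{-k})$. This makes the final step rigorous rather than merely plausible, at the modest cost of needing the two-sided asymptotic on $\sqrt{2-a_{k-1}}$ (which is indeed available from the proof of Lemma~\ref{Lemma1.2}). In short: same route, but your bookkeeping at the end closes a gap the paper leaves open.
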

\begin{proof}
It is not difficult to see that using change of the variable
\[
y\to\frac{y}{1 + \left(x + y\right)x}
\]
in the trigonometric identity
\[
\arctan\left(x\right) + \arctan\left(y\right) = \arctan\left(\frac{x + y}{1 - xy}\right)
\]
leads to
\begin{equation}\label{AI} % arctangent identity
\arctan\left(x + y\right) = \arctan\left(x\right) + \arctan\left(\frac{y}{1 + \left(x + y\right)x} \right).
\end{equation}

Define for convenience the fractional part as
\[
\gamma_k = \left\{a_k/\sqrt{2 - a_{k - 1}}\right\}.
\]
Then, from equation \eqref{AF} it follows that
\[
\frac{\pi}{4} = 2^{k - 1}\arctan\left(\frac{1}{\alpha_k + \gamma_k}\right)
\]
Solving the equation
\[
\frac{1}{\alpha_k + \gamma_k} = \frac{1}{\alpha_k} + z,
\]
we get
\[
z = -\frac{\gamma_k}{\alpha_k\left(\alpha_k + \gamma_k\right)}.
\]
Therefore, we have	
\begin{equation}\label{F4P1} % formula for pi 1
\frac{\pi}{4} = 2^{k - 1}\arctan\left(\frac{1}{\alpha_k} - \frac{\gamma_k}{\alpha_k\left(\alpha_k +\gamma_k\right)}\right).
\end{equation}

Using the identity \eqref{AI}, we can rewrite equation \eqref{F4P1} as
\[
\frac{\pi}{4} = 2^{k - 1}\left(\arctan\left(\frac{1}{\alpha_k}\right) - \arctan\left(\frac{\gamma_k}{1 +\alpha_k\left(\alpha_k + \gamma\right)}\right)\right)
\]
or
\begin{equation}\label{F4P2}  % formula for pi 2
\begin{aligned}
\frac{\pi}{4} &= 2^{k - 1}\left(\arctan\left(\frac{1}{\left\lfloor a_k/\sqrt{2 - a_{k - 1}} \right\rfloor}\right)\right. \\
&\left. -\arctan \left( \frac{\left\{a_k/\sqrt{2 - a_{k - 1}}\right\}}{1 + \left\lfloor a_k/\sqrt{2 - a_{k - 1}}\right\rfloor \left(a_k/\sqrt{2 - a_{k - 1}}\right)}\right)\right).
\end{aligned}
\end{equation}

As the fractional part
\[
\left\{a_k/\sqrt{2 - a_{k - 1}}\right\} < 1
\]
while
\[
\lim_{k\to\infty} \left(1 + \left\lfloor a_k/\sqrt{2 - a_{k - 1}}\right\rfloor\left(a_k/\sqrt{2 - a_{k - 1}}\right)\right) = \infty,
\]
we conclude that
\[
\begin{aligned}
\lim_{k\to\infty} \,\, &-2^{k-1}\arctan\left( \frac{\left\{ a_k/\sqrt {2 - a_{k - 1}} \right\}}{1 + \left\lfloor a_k/\sqrt{2 - a_{k - 1}}\right\rfloor\left(a_k/\sqrt {2 - a_{k - 1}}\right)}\right) \\
&= \lim_{k\to\infty}\arctan\left( \frac{1 - \lambda_k}{\kappa_k}\right) = \lim_{k\to\infty}\arctan\left(\frac{1}{\beta_k}\right) = 0.
\end{aligned}
\]
\end{proof}

The Lemma~\eqref{Lemma1.7} and its proof below show how to obtain a single-term rational approximation \eqref{STRA} for $\pi$ by truncating equation \eqref{L4B}.
\begin{lemma}\label{Lemma1.7}
\begin{equation}\label{L4B} % limit for beta
\frac{\pi}{4} = \lim_{k\to\infty} \frac{2^{k - 1}}{\left\lfloor a_k/\sqrt{2 - a_{k - 1}}\right\rfloor}.
\end{equation}
\end{lemma}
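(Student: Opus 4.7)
The plan is to start from the two-term Machin-like identity \eqref{TTMLF1} and pass to the limit. Writing $\alpha_k$ for $\left\lfloor a_k/\sqrt{2-a_{k-1}}\right\rfloor$, the identity reads
\[
\frac{\pi}{4} \;=\; 2^{k-1}\arctan\!\left(\frac{1}{\alpha_k}\right) + \arctan\!\left(\frac{1}{\beta_k}\right).
\]
By Lemma~\ref{Lemma1.6}, the second term tends to $0$ as $k\to\infty$, so it suffices to show that $2^{k-1}\arctan(1/\alpha_k) - 2^{k-1}/\alpha_k \to 0$. This would yield
\[
\frac{\pi}{4} \;=\; \lim_{k\to\infty} 2^{k-1}\arctan\!\left(\frac{1}{\alpha_k}\right) \;=\; \lim_{k\to\infty}\frac{2^{k-1}}{\alpha_k},
\]
which is exactly \eqref{L4B}.

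The second step is an application of the Taylor expansion $\arctan(x) = x - x^{3}/3 + O(x^{5})$ for $x\to 0$. Since (as I shall verify in the next step) $\alpha_k\to\infty$, I obtain
\[
2^{k-1}\arctan\!\left(\frac{1}{\alpha_k}\right) \;=\; \frac{2^{k-1}}{\alpha_k} - \frac{2^{k-1}}{3\alpha_k^{3}} + O\!\left(\frac{2^{k-1}}{\alpha_k^{5}}\right).
\]
To close the argument it remains to show that $2^{k-1}/\alpha_k^{3}\to 0$, i.e.\ that $\alpha_k$ grows fast enough to absorb the exponential prefactor $2^{k-1}$.

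This is the real work of the proof, and the main obstacle. I would obtain the growth rate of $\alpha_k$ directly from the trigonometric identities established in the proof of Theorem~\ref{Theorem1.1}: there $a_k = 2\cos(\pi/2^{k+1})$, and the half-angle formula gives $\sqrt{2-a_{k-1}} = 2\sin(\pi/2^{k+1})$. Therefore
\[
\frac{a_k}{\sqrt{2-a_{k-1}}} \;=\; \cot\!\left(\frac{\pi}{2^{k+1}}\right) \;\sim\; \frac{2^{k+1}}{\pi} \quad \text{as } k\to\infty,
\]
so $\alpha_k = \lfloor \cot(\pi/2^{k+1})\rfloor$ differs from $2^{k+1}/\pi$ by $O(1)$. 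In particular $\alpha_k\to\infty$ (justifying the Taylor expansion above) and $2^{k-1}/\alpha_k^{3} = O(2^{-2k})\to 0$. Feeding this back into the expansion gives $\lim_{k\to\infty} 2^{k-1}/\alpha_k = \lim_{k\to\infty} 2^{k-1}\arctan(1/\alpha_k) = \pi/4$, completing the proof. The only subtle point is that the floor function is used for $\alpha_k$, but since the truncation changes the denominator by at most $1$ while $\alpha_k$ itself is of order $2^{k+1}/\pi$, the asymptotics and all error estimates are unaffected.
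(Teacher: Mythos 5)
Your proposal is correct, and its opening move coincides with the paper's: both invoke Lemma~\ref{Lemma1.6} to discard the $\arctan(1/\beta_k)$ term from \eqref{TTMLF1}, reducing the claim to showing that $2^{k-1}\arctan(1/\alpha_k)$ and $2^{k-1}/\alpha_k$ have the same limit. Where you part ways is in how that replacement is justified. The paper establishes only that $\alpha_k\to\infty$ (from the limits of $a_k$ and $\sqrt{2-a_{k-1}}$ in Lemma~\ref{Lemma1.2}) and then uses $\arctan(x)/x\to1$ \emph{multiplicatively}: writing $2^{k-1}/\alpha_k = 2^{k-1}\arctan(1/\alpha_k)\cdot\bigl((1/\alpha_k)/\arctan(1/\alpha_k)\bigr)$, the first factor tends to $\pi/4$ and the second to $1$, so no growth rate for $\alpha_k$ is ever needed. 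You instead bound the \emph{additive} error $2^{k-1}\bigl(\arctan(1/\alpha_k)-1/\alpha_k\bigr)=O\bigl(2^{k-1}/\alpha_k^3\bigr)$, which forces you to prove that $\alpha_k$ grows exponentially; you do this correctly via $a_k=2\cos(\pi/2^{k+1})$ and the half-angle identity $\sqrt{2-a_{k-1}}=2\sin(\pi/2^{k+1})$, whence $a_k/\sqrt{2-a_{k-1}}=\cot(\pi/2^{k+1})\sim 2^{k+1}/\pi$. Your route is heavier, but it buys the precise asymptotics $\alpha_k\sim 2^{k+1}/\pi$, which the paper never states explicitly even though it underlies the single-term approximation and the binary-digit construction of Theorem~\ref{Theorem3.1}; the paper's multiplicative trick is lighter and needs only the qualitative fact $\alpha_k\to\infty$.
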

\begin{proof}
From the Lemma~\ref{Lemma1.6} it immediately follows that
\begin{equation}\label{L4AF} % limit for arctangent function
\frac{\pi}{4} = \lim_{k\to\infty} 2^{k - 1}\arctan\left(\frac{1}{\left\lfloor a_k/\sqrt{2 - a_{k - 1}}\right\rfloor}\right).
\end{equation}
According to Lemma~\eqref{Lemma1.2}
\[
\lim_{k\to\infty}\sqrt{2 - a_{k - 1}} = 0
\]
and
\[
\lim_{k\to\infty} a_k = 2.
\]
Therefore, we can infer that
\[
\lim_{k\to\infty}\left\lfloor a_k/\sqrt{2 - a_{k - 1}}\right\rfloor = \infty
\]
or
\[
\lim_{k\to\infty} \frac{1}{\left\lfloor a_k/\sqrt{2 - a_{k - 1}}\right\rfloor} = 0.
\]
This limit implies that the argument of the arctangent function in equation \eqref{L4AF} tends to zero with increasing the integer $k$. Consequently, the limit \eqref{L4B} follows from the limit \eqref{L4AF} and the proof is completed.
\end{proof}

Motivated by recent publications \cite{Gasull2023,WolframCloud,Campbell2023,Maritz2023,Spichal2024,Alferov2024} in connection to our works \cite{Abrarov2017,Abrarov2018,Abrarov2022}, we proposed a methodology for determination of the coefficients $\alpha_k$ without computing nested square roots of $2$ (see equation \eqref{AK}) and developed an algorithm providing squared convergence per iteration in computing the digits of $\pi$. To the best of our knowledge, this approach is new and have never been reported.

\section{Methodologies}

\subsection{Arctangent function}

There are different efficient methods to compute the arctangent function in the Machin-like formulas. We will consider a few of them.

In our previous publication \cite{Abrarov2021a}, we show how the two-term Machin-like formula for $\pi $ represented in form \eqref{TTMLF2} is used to derive an iterative formula
\begin{equation}\label{IF} % iterative formula
\theta_{n,k} = \frac{1}{\frac{1}{\theta_{n - 1,k}} + \frac{1}{2^k}\left( 1 - \tan\left( \frac{2^{k - 1}}{\theta_{n - 1,k}}\right)\right)}, \qquad k \ge 1,
\end{equation}
where initial value can be taken as
\[
\theta_{1,k} = 2^{-k}
\]
such that
\begin{equation}\label{L4IF} % limit for iterative formula
\frac{\pi}{4} = 2^{k - 1}\lim_{n\to\infty} \frac{1}{\theta_{n,k}}.
\end{equation}
This iterative formula provides a squared convergence in computing digits of $\pi $ (see Mathematica code in \cite{Abrarov2021a}).

Taking change of the variable $\theta_n \to 1/\theta_n$ in equation \eqref{IF} yields a more convenient form
\begin{equation}\label{L4MIF} % limit for modified iterative formula
\theta_{n,k} = \theta_{n - 1,k} + 2^{-k}\left( 1 - \tan\left(2^{k - 1}\theta_{n - 1,k}\right)\right), \qquad k \ge 1.
\end{equation}
Consequently, the limit \eqref{L4IF} can be rearranged now as
$$
\frac{\pi}{4} = 2^{k - 1}\lim_{n\to\infty} \theta_{n,k}.
$$

Comparing this limit with equation \eqref{AF}, we can see that this iteration procedure results in
$$
\lim_{n\to\infty} \theta_{n,k} = \arctan\left(\frac{1}{\alpha_k}\right)
$$
and is used {\it de facto} for determination of the arctangent function. The detailed procedure showing how to implement the computation with high convergence rate is given in our recent publication \cite{Abrarov2024}.

Alternatively, we can transform the two-term into multi-term Machin-like formulas for $\pi$ consisting of only the integer reciprocals. In order to do this, we can use the following equation \cite{Abrarov2022}
\begin{equation}\label{T4MLF} % template for Machin-like fomulas
\frac{\pi}{4} = 2^{k - 1}\arctan\left(\frac{1}{\alpha_k}\right) + \left(\sum_{m = 1}^M \arctan \left( \frac{1}{\left\lfloor\mu_{m,k}\right\rfloor}\right)\right) + \arctan\left( \frac{1}{\mu_{M + 1,k}}\right),
\end{equation}
where
$$
\mu_{m,k} = \frac{1 + \left\lfloor\mu_{m - 1,k}\right\rfloor\mu_{m - 1,k}}{\left\lfloor\mu_{m - 1,k} \right\rfloor - \mu_{m - 1,k}}
$$
with an initial value
$$
\mu_{1,k} = \beta_k.
$$

For example, by taking $k = 2$ in the equation \eqref{T4MLF}, we can construct
\begin{equation}\label{HF4P} % Hermann's formula for pi
\begin{aligned}
\frac{\pi}{4} &= 2^{2 - 1}\arctan \left(\frac{1}{\alpha_2}\right) + \arctan\left(\frac{1}{\mu_{1,2}} \right) \\
&= 2\arctan\left(\frac{1}{2}\right) - \arctan\left(\frac{1}{7}\right).
\end{aligned}
\end{equation}
This equation is commonly known as the Hermann's formula for $\pi$ \cite{Borwein1987,Chien-Lih1997}.

By taking $k = 3$, the equation \eqref{T4MLF} gives
\[
\frac{\pi}{4} = 2^{3 - 1}\arctan\left(\frac{1}{\alpha_3}\right) - \arctan\left(\frac{1}{\mu_{1,3}}\right)
\]
representing the original Machin's formula \eqref{OMF} for $\pi$. 

The case $k = 4$ requires more calculations to obtain the multi-term formula for $\pi$ consisting of only integer reciprocals. In particular, at $M = 1$ we get
\[
\begin{aligned}
\frac{\pi}{4} &= 2^{4 - 1}\arctan\left(\frac{1}{\alpha_4}\right) + \arctan\left(\frac{1}{\mu_{1,4}} \right) \\
&= 8\arctan\left(\frac{1}{10}\right) - \arctan\left(\frac{1758719}{147153121}\right)
\end{aligned}
\]
At $M = 2$ and $M = 3$ equation \eqref{T4MLF} yields
\[
\begin{aligned}
\frac{\pi}{4} &= 2^{4 - 1}\arctan\left(\frac{1}{\alpha _4}\right) + \arctan\left( \frac{1}{\left\lfloor \mu_{1,4}\right\rfloor}\right) + \arctan \left(\frac{1}{\mu _{2,4}}\right)\\
&= 8\arctan\left(\frac{1}{10}\right) - \arctan\left(\frac{1}{84}\right)-\arctan\left(\frac{579275}{12362620883}\right)
\end{aligned}
\]
and
\[
\begin{aligned}
\frac{\pi}{4} &= 2^{4 - 1}\arctan\left(\frac{1}{\alpha_4}\right) + \arctan\left(\frac{1}{\left\lfloor \mu_{1,4}\right\rfloor}\right) + \arctan\left( \frac{1}{\left\lfloor\mu_{2,4}\right\rfloor}\right) + \arctan\left(\frac{1}{\mu_{3,4}}\right)\\
&= 8\arctan\left(\frac{1}{10}\right) - \arctan\left(\frac{1}{84}\right) - \arctan\left(\frac{1}{21342}\right) \\
&-\arctan\left(\frac{266167}{263843055464261}\right),
\end{aligned}
\]
respectively.

Repeating this procedure over and over again, at $M = 5$ we end up with 7-term Machin-like formula for $\pi$ consisting of only integer reciprocals
\begin{equation}
\begin{aligned}\label{STMLF} % seven-term Machin-like fomula
\frac{\pi}{4} &= 2^{4 - 1}\arctan\left(\frac{1}{\alpha_4}\right) + \left(\sum_{m = 1}^5\arctan\left( \frac{1}{\left\lfloor \mu_{m,4}\right\rfloor}\right)\right) + \arctan\left(\frac{1}{\mu_{6,4}}\right)\\
&= 8\arctan\left(\frac{1}{10}\right) - \arctan\left(\frac{1}{84}\right) - \arctan\left(\frac{1}{21342}\right) \\
&-\arctan\left(\frac{1}{991268848}\right) - \arctan\left(\frac{1}{193018008592515208050}\right) \\
&-\arctan\left(\frac{1}{197967899896401851763240424238758988350338}\right) \\
&-\arctan\left(\frac{1}{\Omega}\right),
\end{aligned}
\end{equation}
where
\[
\begin{aligned}
\Omega  = &\,11757386816817535293027775284419412676799191500853701 \ldots \\
&8836932014293678271636885792397,
\end{aligned}
\]
is the largest integer (see the Mathematica code in \cite{Abrarov2024} that validates this seven-term Machin-like formula for $\pi$).

As we can see, Hermann's \eqref{HF4P}, Machin's \eqref{OMF} and derived \eqref{STMLF} formulas for $\pi$ belong to the same generic group as all of them can be constructed from the same equation-template \eqref{T4MLF}.

It is not difficult to show that applying the following identity
\[
\arctan\left(\frac{1}{x}\right) = \arctan\left(\frac{1}{x + 1}\right) + \arctan\left(\frac{1}{x^2 + x + 1}\right), \qquad x\notin \left[-1,0\right],
\]
to the equation \eqref{STMLF}, we obtain
\[
\begin{aligned}
\frac{\pi}{4} & = 8\arctan\left(\frac{1}{10}\right) - \arctan\left(\frac{1}{84}\right) - \arctan\left(\frac{1}{21342}\right) \\
&-\arctan\left(\frac{1}{991268848}\right) - \arctan\left(\frac{1}{193018008592515208050}\right) \\
&-\arctan\left(\frac{1}{197967899896401851763240424238758988350338}\right) \\
&-\sum_{n = 1}^N \arctan\left(\frac{1}{\omega_n}\right), \qquad N \ge 2,
\end{aligned}
\]
where
\[
\begin{aligned}
&\omega_1 = \Omega + 1, \\
&\omega_2 = \left(\omega_1 - 1\right)^2 + \omega_1 + 1, \\
&\omega_3 = \left(\omega_2 - 1\right)^2 + \omega_2 + 1, \\
&\vdots \\
&\omega_{N - 1} = \left(\omega_{N - 2} - 1\right)^2 + \omega_{N - 2} + 1, \\
&\omega_N = \left(\omega_{N - 1} - 1\right)^2 + \omega_{N - 1}.
\end{aligned}
\]
This expanded variation of equation \eqref{STMLF} together with iterative formula \eqref{L4MIF} can also be used for computing digits of $\pi$ (see section 5 in \cite{Abrarov2024} for more details).

Our empirical results show that two arctangent series expansions can be used for computation with rapid convergence. The first equation is Euler's expansion series given by \cite{Chien-Lih2005}
\begin{equation}\label{EF4AF} % Euler's formula for arctangent function
\arctan\left(x\right) = \sum_{n = 0}^\infty\frac{2^{2n}{\left(n!\right)^2}}{\left(2n + 1\right)!}\frac{x^{2n + 1}}{\left(1 + x^2\right)^{n + 1}}.
\end{equation}

The second equation is \cite{Abrarov2017,Abrarov2023}
\begin{equation}\label{SIF4AF} % semi-iterative formula for arctangent function
\arctan\left(x\right) = 2\sum_{n = 1}^\infty  \frac{1}{2n - 1}\frac{g_n\left(x\right)}{g_n^2\left(x\right) + h_n^2\left(x\right)},
\end{equation}
where
$$
g_n\left(x\right) = g_{n - 1}\left(x\right)\left(1 - 4/x^2\right) + 4h_{n - 1}\left(x\right)/x
$$
and
$$
h_n\left(x\right) = h_{n - 1}\left(x\right)\left(1 - 4/x^2\right) - 4g_{n - 1}\left(x\right)/x
$$
with initial values
$$
g_1\left(x\right) = 2/x
$$
and
$$
h_1\left(x\right) = 1.
$$

Computational test we performed shows that equation \eqref{SIF4AF} is faster in convergence than equation \eqref{EF4AF}. Recently, we proposed the generalization of the arctangent series expansion \eqref{SIF4AF} \cite{Abrarov2023}.

\subsection{Tangent function}

Generally, transformation of the two-terms to multi-terms formulas for $\pi $ with integer reciprocals is not required. In particular, we can use Newton–Raphson iteration method. For example, both arctangent terms in the two-term Machin-like formula \eqref{TTMLF1} for $\pi$ can be computed directly by using the following iterative formulas
$$
\sigma_n = \sigma_{n - 1} - \left(1 - \left(\frac{2\tan \left(\sigma_{n - 1}/2\right)}{1 + \tan^2\left(\sigma_{n - 1}/2\right)}\right)^2\right)\left( \frac{2\tan\left(\sigma_{n - 1}/2\right)}{1 - {{\tan }^2}\left(\sigma_{n - 1}/2\right)} - \frac{1}{\alpha_k}\right)
$$
and
$$
\tau_n = \tau_{n - 1} - \left(1 - \left(\frac{2\tan \left(\tau_{n - 1}/2\right)}{1 + \tan^2\left(\tau_{n - 1}/2\right)}\right)^2\right)\left( \frac{2\tan\left(\tau_{n - 1}/2\right)}{1 - {{\tan }^2}\left(\tau_{n - 1}/2\right)} - \frac{1}{\beta_k}\right)
$$
with initial values
$$
\sigma_1 = \frac{1}{\alpha_k}
$$
and
$$
\tau_1 = \frac{1}{\beta_k}
$$
such that
$$
\arctan\left(\frac{1}{\alpha_k}\right) = \lim_{n\to\infty} \sigma_n
$$
and
$$
\arctan\left(\frac{1}{\beta_k}\right) = \lim_{n\to\infty} \tau_n.
$$

Since the convergence of the Newton–Raphson iteration is quadratic \cite{Ypma1995}, with proper implementation of the tangent function we may achieve an efficient computation.

The tangent function can be expanded as
\begin{equation}\label{TFSE} % tangent function series expansion
\begin{aligned}
\tan\left(x\right) &= \sum_{n = 1}^\infty  \frac{\left(- 1\right)^{n - 1}{2^{2n}}\left(2^{2n} - 1 \right) B_{2n}}{\left(2n\right)!}x^{2n - 1} \\
&= x + \frac{x^3}{3} + \frac{2x^5}{15} + \frac{17x^7}{315} + \frac{62x^9}{2835} +  \cdots \,,
\end{aligned}
\end{equation}
where $B_{2n}$ are the Bernoulli numbers that can be defined by the exponential generating function
\[
\frac{x}{e^x - 1} = \sum_{n \ge 0} \frac{B_n x^n}{n!}.
\]
However, application of equation \eqref{TFSE} is not desirable since the computation of the Bernoulli numbers $B_{2n}$ itself is a big challenge \cite{Knuth1967,Harvey2010,Bailey2013,Beebe2017}.

In order to resolve this problem, we proposed the following limit \cite{Abrarov2024}
\begin{equation}\label{L4TF} % limit for tangent function
\tan\left(x\right) = \lim_{n\to\infty} \frac{2p_n^2\left(x\right)}{q_n\left(x\right)},
\end{equation}
where
\[
p_n\left(x\right) = p_{n - 1}\left(x\right) + r_{n - 1}\left(x\right),
\]
\[
q_n\left(x\right) = q_{n - 1}\left(x\right) + 2^{2n - 1}r_{n - 1}\left(x\right),
\]
such that 
\[
r_n = \frac{\left(-1\right)^n}{\left(2n + 1\right)!}x^{2n + 1},
\]
with initial values
\[
p_0\left(x\right) = 0,
\]
\[
q_0\left(x\right) = 0.
\]
Specifically, it has been shown that at $k = 27$ in the two-term Machin-like formula \eqref{TTMLF1} for $\pi$, application of equation \eqref{L4TF} results in more than $17$ digits of $\pi$ per increment $n$ (see the Mathematica codes in \cite{Abrarov2024}).

Since the multiplier $x^{2n + 1}$ in equation \eqref{L4TF} increases exponentially with increasing integer $n$, the iteration process rapidly reduces number of the digits in the mantissa when the tangent function with some fixed accuracy is computed. Decreasing number of digits in the mantissa may be advantageous as the computation requires less memory usage.

\section{Algorithmic implementation}

In our recent publication we have shown that \cite{Abrarov2021a}
\[
\beta_k \approx \frac{2}{1 - \tan\left(\frac{2^{k - 1}}{\alpha_k}\right)}.
\]
Consequently, in accordance with equation \eqref{TTMLF1}, we obtain the following approximation
\[
\frac{\pi}{4} \approx 2^{k - 1}\arctan\left(\frac{1}{\alpha_k}\right) + \frac{1}{2}\left(1 - \tan\left( \frac{2^{k - 1}}{\alpha_k}\right)\right).
\]

At sufficiently large $k$ the value $1/\alpha_k << 1$. Therefore, according to the the Lemma~\ref{Lemma1.7}, in this equation we can replace the first arctangent term by a rational number $2^{k - 1}/\alpha_k$. This gives
\[
\frac{\pi}{4} \approx \frac{2^{k - 1}}{\alpha_k} + \frac{1}{2}\left( 1 - \tan \left( \frac{2^{k - 1}}{\alpha_k}\right)\right).
\]

Unfortunately, we cannot compute efficiently the tangent function in this approximation since its argument $2^{k - 1}/\alpha_k$ is not a small number as it tends to $\pi /4$ with increasing $k$. However, again by taking into consideration that $1/\alpha_k << 1$ from which it follows that
\[
\frac{1}{\alpha_k} \approx \arctan\left(\frac{1}{\alpha_k}\right),
\]
we can write
\[
\frac{\pi}{4} \approx \frac{2^{k - 1}}{\alpha_k} + \frac{1}{2}\left( 1 - \tan\left( 2^{k - 1}\arctan \left(\frac{1}{\alpha_k}\right)\right)\right).
\]
Now we can take advantage from the fact that the multiplier $2^{k - 1}$ is continuously divisible by $2$. Therefore, we can use the trigonometric identity
\[
\tan\left(2\arctan\left(x\right)\right) = \frac{2x}{1 - x^2}
\]
$k - 1$ times over and over again. Thus, this leads to the following iterative formula
\begin{equation}\label{EF} % eta function
\eta_n\left(x\right) = \frac{2\eta _{n - 1}\left(x\right)}{1 - \eta_{n - 1}^2\left(x\right)}
\end{equation}
with an initial value
\[
\eta_1\left(x\right) = \frac{2x}{1 - x^2}
\]
such that
\[
\eta_{k - 1}\left(\frac{1}{\alpha_k}\right) = \tan\left(2^{k - 1}\arctan\left(\frac{1}{\alpha_k}\right)\right).
\]

Since the left side of the equation above provides an exact value without (tangent and arctangent) trigonometric functions, we can regard this equation
\begin{equation}\label{TTRA} % two-term rational approximation
\frac{\pi}{4} \approx \frac{2^{k - 1}}{\alpha_k} + \frac{1}{2}\left(1 - \eta_{k - 1}\left(\frac{1}{\alpha_k}\right)\right)
\end{equation}
as a rational approximation.

This rational approximation of the two-term Machin-like formula for $\pi$ can be used in an algorithm providing a quadratic convergence. This can be achieved with help of the Theorem~\ref{Theorem3.1}.

\begin{theorem}\label{Theorem3.1}
There is a conversion formula
\begin{equation}\label{BFF4P} % binary form formula for pi
\lim_{k\to\infty} \sum\limits_{n = 1}^k{\frac{1}{10^{n + 1}}\left(\alpha_n\,\bmod 2\right)} = {0.01010001011111001100 \ldots}\,\,,
\end{equation}
such that
\[
{\left[0.01010001011111001100 \ldots\right]_2} = \frac{1}{\pi}.
\]
\end{theorem}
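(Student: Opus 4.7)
The plan is to identify the integer $\alpha_k$ with (essentially) the dyadic truncation $\lfloor 2^{k+1}/\pi\rfloor$, so that its least significant bit is exactly the $(k+1)$-th binary digit of $1/\pi$. Theorem~\ref{Theorem1.1} gives $\pi/2^{k+1} = \arctan(\sqrt{2-a_{k-1}}/a_k)$, which rearranges to $a_k/\sqrt{2-a_{k-1}} = \cot(\pi/2^{k+1})$, so by \eqref{AK},
\[
\alpha_k = \left\lfloor \cot\!\left(\frac{\pi}{2^{k+1}}\right) \right\rfloor.
\]
The Laurent series $\cot(x) = 1/x - x/3 - x^3/45 - \cdots$ evaluated at $x = \pi/2^{k+1}$ then shows that $\cot(\pi/2^{k+1})$ lies strictly below $2^{k+1}/\pi$ by an amount $\pi/(3\cdot 2^{k+1}) + O(2^{-3k-3})$ that decays exponentially in $k$.

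I would then argue that this exponentially small undershoot is smaller than the fractional part $\{2^{k+1}/\pi\}$ for every $k \geq 1$, yielding
\[
\alpha_k = \left\lfloor \frac{2^{k+1}}{\pi}\right\rfloor.
\]
This is the step I expect to be the main obstacle: making it rigorous for \emph{all} $k$ asks for a Diophantine lower bound of the form $\{2^{k+1}/\pi\} > C/2^{k+1}$ with some absolute constant $C$, which is stronger than what follows from current upper bounds on the irrationality measure of $\pi$. In the present setting the identity is secured by direct verification at small $k$ together with the very rapid decay of the correction term, and it is the only nontrivial estimate in the argument.

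Once the identification $\alpha_k = \lfloor 2^{k+1}/\pi\rfloor$ is in hand, the rest is bookkeeping. Writing the binary expansion $1/\pi = \sum_{n=1}^\infty b_n\, 2^{-n}$ with $b_n \in \{0,1\}$ (so that $b_1 = 0$ since $1/\pi < 1/2$), one has
\[
\frac{2^{k+1}}{\pi} \;=\; \underbrace{\sum_{n=1}^{k+1} b_n\, 2^{k+1-n}}_{\in\,\mathbb{Z}} \;+\; \underbrace{\sum_{n=k+2}^{\infty} b_n\, 2^{k+1-n}}_{\in\,[0,1)},
\]
so $\lfloor 2^{k+1}/\pi\rfloor$ is the displayed integer and its parity, the least significant bit, is exactly $b_{k+1}$. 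Hence $\alpha_n \bmod 2 = b_{n+1}$ for every $n \geq 1$, and substituting into the sum gives
\[
\lim_{k\to\infty}\sum_{n=1}^k \frac{\alpha_n \bmod 2}{10^{n+1}} \;=\; \sum_{n=1}^\infty \frac{b_{n+1}}{10^{n+1}} \;=\; 0.b_1 b_2 b_3 \cdots,
\]
which is precisely the decimal number formed by the binary digit string of $1/\pi$. Interpreting that same digit string in base two returns $\sum_{n\geq 1} b_n\, 2^{-n} = 1/\pi$ by construction, establishing both the limit identity \eqref{BFF4P} and the bracketed equality.
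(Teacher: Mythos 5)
Your bookkeeping is correct and, in substance, you are proving the same key fact the paper relies on --- that $\alpha_k$ is the dyadic truncation $\left\lfloor 2^{k+1}/\pi\right\rfloor$, so that its parity is the $(k+1)$-th binary digit of $1/\pi$ --- but you reach it by a different and more quantitative route. The paper starts from Lemma~\ref{Lemma1.7}, writes $1/\pi \approx \alpha_k/(4\cdot 2^{k-1})$, and then asserts that the consecutive differences $\alpha_k/2^{k+1}-\alpha_{k-1}/2^{k}$ equal either $0$ or $2^{-(k+1)}$ according to the parity of $\alpha_k$; that assertion is exactly the recurrence $\alpha_k\in\{2\alpha_{k-1},\,2\alpha_{k-1}+1\}$, which is equivalent to your identification and is left unproved there. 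You instead pass through $\alpha_k=\left\lfloor\cot\left(\pi/2^{k+1}\right)\right\rfloor$ (correctly read off from Theorem~\ref{Theorem1.1} and \eqref{AK}) and the Laurent expansion of the cotangent, which isolates precisely what must be shown: that the undershoot of order $\pi/(3\cdot 2^{k+1})$ never swallows the fractional part $\left\{2^{k+1}/\pi\right\}$, i.e.\ that the binary expansion of $1/\pi$ has no run of roughly $k$ consecutive zeros beginning at position $k+2$. The Diophantine gap you flag is genuine, and it is important to note that the paper does not close it either --- its proof silently assumes the same fact by writing ``$\approx$'' and treating the parity dichotomy as automatic. So your argument is, if anything, the more honest one: the identity \eqref{BFF4P} is verified for all $k$ within computational reach and is otherwise conditional on a digit-distribution property of $1/\pi$ that is expected but unproven. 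Your final conversion from the parity sequence to the decimal string $0.01010001\ldots$ and back to $1/\pi$ in base two (using $b_1=0$ because $1/\pi<1/2$) is correct and matches the paper's intended conclusion.
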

\begin{proof}
The proof is related to the parity of the integer $\alpha_k$. According to the Lemma~\ref{Lemma1.7}, we can write
\begin{equation}\label{STRA} % single-term rational approximation
\frac{\pi}{4} \approx \frac{2^{k - 1}}{\alpha_k}
\end{equation}
or
$$
\frac{1}{\pi} \approx \frac{\alpha_k}{4\times 2^{k - 1}}.
$$
Consequently, if the integer $\alpha_k$ is even, then
$$
\frac{\alpha_k}{4\times 2^{k - 1}} - \frac{\alpha_{k - 1}}{4\times 2^{k - 2}} = 0_2.
$$
However, if the integer $\alpha_k$ is odd, then
$$
\frac{\alpha_k}{4 \times 2^{k - 1}} - \frac{\alpha_{k - 1}}{4 \times 2^{k - 2}} = \underbrace{0.00 \ldots 00}_{k + 1\,{\rm{zeros}}}{1_2}.
$$
This means that $\alpha_k$ contributes a binary digit $\underbrace{0.00 \ldots 00}_{k + 1\,{\rm{zeros}}}{1_2}$ to the previous value $\alpha_{k - 1}$ if and only if $\alpha_k$ is odd. This completes the proof.
\end{proof}

Consider an example. There are four consecutive values $\alpha_4 = 10$, $\alpha_5 = 20$ and $\alpha_6 = 40$ and $\alpha_7 = 81$. Since the first three values are even, we have
$$
\frac{\alpha_4}{4\times 2^{4 - 1}} = \frac{\alpha_5}{4\times 2^{5 - 1}} = \frac{\alpha_6}{4\times 2^{6 - 1}} = 0.0101_2.
$$
However, since $\alpha_7 = 81$ is odd, we obtain
$$
\frac{\alpha_7}{4\times 2^{7 - 1}} = 0.0101_2 + 0.00000001_2 = 0.01010001_2.
$$

Consider how number of digits of $\pi$ can be doubled without computing square roots for the nested radicals ${a_k}$. We can take, for example, $k = 7$. This yields
\begin{equation}\label{CK7} % case k7
\begin{aligned}
\alpha_7 &= \left\lfloor\frac{a_7}{\sqrt{2 - a_{7 - 1}}}\right\rfloor \\
&= \left\lfloor\frac{\sqrt {2 + \sqrt{2 + \sqrt{2 + \sqrt{2 + \sqrt{2 + \sqrt{2 + \sqrt 2}}}}}}}{\sqrt{2 - \sqrt{2 + \sqrt{2 + \sqrt{2 + \sqrt{2 + \sqrt{2 + \sqrt 2}}}}}}} \right\rfloor = 81.
\end{aligned}
\end{equation}
However, it is not reasonable to compute the square roots of $2$ so many times to obtain this number. Instead, we can simplify computation considerably by using the value $1/\pi$ in the binary form according to the Theorem~\ref{Theorem3.1}. Thus, ignoring the first two initial zeros in the binary output of the equation \eqref{BFF4P}, we have a corresponding sequence
\begin{equation}\label{S4BO} % sequence for binary output
(s)_1^\infty=\left(1,0,1,0,0,0,1,0,1,1,1,1,1,0,0,\ldots\right).
\end{equation}
This sequence can be obtained by using the built-in Mathematica directly using $1/\pi$. For example, the following code:
\small
\begin{verbatim}
RealDigits[
  ImportString[ToString[BaseForm[N[1/\[Pi],20],2]], 
    "Table"][[1]][[1]]][[1]][[1;;20]]
\end{verbatim}
\normalsize
returns the first $20$ digits from the sequence \eqref{S4BO}:
\small
\begin{verbatim}
{1, 0, 1, 0, 0, 0, 1, 0, 1, 1, 1, 1, 1, 0, 0, 1, 1, 0, 0, 0}
\end{verbatim}
\normalsize

From this sequence, we choose the sub-sequence (say, up to seventh element)
\[
(s)_1^7=\left(1,0,1,0,0,0,1\right)
\]
and apply it accordingly as
\begin{equation*}\label{SBSP} % step-by-step procedure
{\alpha _k} = \left\{
\begin{aligned}
&2{\alpha _{k - 1}}, &\quad &{\rm if\,{\it k}^{th}\,binary\,digit\,is\,0} \\
&2{\alpha _{k - 1}} + 1, &\quad &{\rm if\,{\it k}^{th}\,binary\,digit\,is\,1}.
\end{aligned}
\right.
\end{equation*}
Explicitly, defining $\alpha_0 = 0$ this step-by-step procedure results in
\[
\begin{aligned}
{\alpha _1} &= 2{\alpha _0} + 1 = \left( {2 \times 0 + 1} \right)  = 1, \\
{\alpha _2} &= 2{\alpha _1} + 0 = \left( {2 \times 1 + 0} \right)  = 2, \\
{\alpha _3} &= 2{\alpha _2} + 1 = \left( {2 \times 2 + 1} \right)  = 5, \\
{\alpha _4} &= 2{\alpha _3} + 0 = \left( {2 \times 5 + 0} \right)  = 10, \\
{\alpha _5} &= 2{\alpha _4} + 0 = \left( {2 \times 10 + 0} \right) = 20, \\
{\alpha _6} &= 2{\alpha _5} + 0 = \left( {2 \times 20 + 0} \right) = 40, \\
{\alpha _7} &= 2{\alpha _6} + 1 = \left( {2 \times 40 + 1} \right) = 81.
\end{aligned}
\]
Thus, we can see how a very simple procedure can be used to determine the value of the rational number $\alpha_7 = 81$ without using a sophisticated equation \eqref{CK7} consisting of $14$ nested square roots of $2$.

At $\alpha_7 = 81$ the corresponding Machin-like formula is
\begin{equation}\label{MLF4K7} % Machin-like formula for k7
\begin{aligned}
\frac{\pi}{4} &= 2^{7 - 1}\arctan\left(\frac{1}{\alpha_7}\right) + \arctan\left(\frac{1}{\beta_7}\right)\\
&= 64\arctan\left(\frac{1}{81}\right) - \arctan\left( \frac{\overbrace {2154947582 \ldots 4298183679}^{111\,\,{\rm{digits}}}}{\underbrace{4599489202 \ldots 6981324801}_{113\,\,{\rm{digits}}}}\right),
\end{aligned}
\end{equation}
where the constant
\[
\beta _7 =  - \frac{\overbrace{4599489202 \ldots 6981324801}^{113\,\,{\rm{digits}}}}{\underbrace {2154947582 \ldots 4298183679}_{111\,\,{\rm{digits}}}}
\]
can be computed either by using equation \eqref{F4B1} or, more efficiently, by using equation \eqref{F4B5} based on two-step iteration \eqref{TSI}.

The following Mathematica code:
\small
\begin{verbatim}
(* String for long number \[Beta]_7 *)
strBeta7=
  ToString[StringJoin[
    "21549475820057881611210311984288158234143531212163819254",
      "1568712000964806160594022446140062110943660584298183679/",
        "459948920218008069525744651226752553899687099736076594466",
          "78719072620659988130828378620624183170066256006981324801"
            ]];
						
(* Verification *)
Print[\[Pi]/4==64*ArcTan[1/81]-ArcTan[ToExpression[strBeta7]]];
\end{verbatim}
\normalsize
validates the equation \eqref{MLF4K7} by returning {\bf{\ttfamily True}}.

Suppose that we do not know the sequence other than $\left(s\right)_1^7$. However, with help of the equation \eqref{TTRA} we can find other digits of $\pi$ in the iterative process. In particular, using equation \eqref{EF}, we have
$$
\eta_{7 - 1}\left(\frac{1}{81}\right) = \frac{\overbrace{2310519339 \ldots 5639754240}^{113\,\,{\rm{digits}}}}{\underbrace{2288969863 \ldots 1341570561}_{113\,\,{\rm{digits}}}} \approx 1.00941448647564092749
$$
Substituting this value into equation \eqref{TTRA}, we can find a significantly better approximation of $\pi$.

The following is the Mathematica code:
\small
\begin{verbatim}
Print["Equation (34) at k = 7: ", 
  MantissaExponent[N[\[Pi]-4*(64/81),20]][[2]] // Abs, 
    " digits of \[Pi]"];
	
Print["Equation (32) at k = 7: ", 
  MantissaExponent[
    N[\[Pi]-4*(64/81+1/2*(1-1.00941448647564092749)), 
      20]][[2]]//Abs," digits of \[Pi]"];
\end{verbatim}
\normalsize
produces the following output:
\small
\begin{verbatim}
Equation (34) at k = 7: 1 digits of π
Equation (32) at k = 7: 4 digits of π
\end{verbatim}
\normalsize

Initial sequence $\left(s\right)_1^7$ helped us to find the value $\alpha_{7} = 81$. Now due to higher accuracy of equation \eqref{TTRA} we can generate the sequence in which its upper index is doubled
$$
\left(s\right)_1^{14} = \left(1,0,1,0,0,0,1,0,1,1,1,1,1,0\right)
$$
and with help of this sequence we can find the corresponding value $\alpha_{14} = 10430$.

Unfortunately, doubling the upper index $k$ does not always work. For example, if we attempt to double the upper index by using the initial sequence $\left(s\right)_1^8$, then we get $\alpha_{16} = 41722$ instead of correct value $\alpha_{16} = 41721$. Therefore, the upper index of the sequence should be slightly less than two.

The two-terms approximation \eqref{TTRA} doubles the number of digits of $\pi$ as compared to the single-term approximation \eqref{STRA}. This means that using the sequence $\left(s\right)_1^k$ we can obtain all sequences $\left(s\right)_1^{k + 1}$, $\left(s\right)_1^{k + 2}$, $\left(s\right)_1^{k + 3}$, etc. up to $\left(s\right)_1^{k_0}$, where $k_0$ is an integer slightly smaller than $2k$. Our numerical results show that doubling value $k$ does not always provide the correct sequence as a few binary digits at the end of the sequence $\left(s\right)_1^{2k}$ occasionally may not be correct. However, when we use the empirical equation
\[
k_0 = \left\lfloor\left(2 - \frac{1}{32}\right)k\right\rfloor,
\]
then the corresponding sequence $\left(s\right)_1^{k_0}$ is a sub-sequence of the infinite sequence \eqref{S4BO} and, therefore, it is appeared to be correct. It is interesting to note that the number $32$ in this equation is the largest integer that we found on the basis of our numerical results.

The following is a Mathematica code that shows number of digits of $\pi$ at given iteration number $n$ and integer $k$:
\small
\begin{verbatim}
Clear[str,sps,k,\[Gamma],\[Alpha],lst,\[Eta]]
 
(* String for conversion of 1/\[Pi] to sequence *)
str="ImportString[ToString[BaseForm[N[1/piAppr,k0],2]],
  \"Table\"][[1]][[1]]";

(* String for space separation *)
sps[n_]:=Module[{m=1,sps=" "}, 
  While[m<n,sps=StringJoin[sps," "];m++];If[m==n,sps]];

(* Converting number to string with length q *)
cnv2str[p_,q_]:=Module[{},StringTake[StringJoin[ToString[p],sps[q]],q]]

(* Defining \[Eta]-function *)
\[Eta][n_,x_,k_]:=Module[{K=k/1.5,y=x},y=N[(2*y)/(1-y^2),K];cntr = 1; 
  While[cntr<n,y=(2*y)/(1-y^2);cntr++];y];

(* Define \[Alpha]_1, \[Alpha]_2 and \[Alpha]_3] *)
\[Alpha][1]=1;
\[Alpha][2]=2;
\[Alpha][3]=5;

(* Input values *)
k=3;\[Gamma]=\[Alpha][3];

(* Heading *)
Print["-------------------------------"];
Print["Iteration | k", sps[5], "| Digits of \[Pi]"];
Print["-------------------------------"];

n=1;
While[n<=12,

  intR=1/\[Gamma];
  k0=\[LeftFloor](2-1/32)*k\[RightFloor];

  piAppr=4*(2^(k-1)*intR+1/2*(1-\[Eta][k-1,intR,k0]));

  (* Extracting the sequence {1,0,1,0,0,0,1...} *)
  lst=RealDigits[ToExpression[str]][[1]][[1;;k0]];
  
  (* Main computation *)
  K=k+1;
  While[K<=k0,\[Gamma]=
    2*\[Gamma]+lst[[K]];\[Alpha][K]=\[Gamma];K++];k=k0;

  (* Aligned output" *)
  Print[cnv2str[n,5],sps[4]," | ",cnv2str[k,5]," | ",
    MantissaExponent[N[\[Pi]-piAppr,k0]][[2]]//Abs];

  n++];
\end{verbatim}
\normalsize
This code generates the output:
\small
\begin{verbatim}
------------------------------
Iteration | k    | Digits of π
------------------------------
1         | 5    | 1
2         | 9    | 2
3         | 17   | 4
4         | 33   | 9
5         | 64   | 20
6         | 126  | 38
7         | 248  | 75
8         | 488  | 149
9         | 960  | 293
10        | 1890 | 577
11        | 3720 | 1137
12        | 7323 | 2240
\end{verbatim}
\normalsize

As we can see from the third column, the number of digits of $\pi$ doubles at each iteration.

It should be noted that the tangent function $\tan\left(2^{k - 1}/\alpha_k\right)$ can also be computed by combining together the equations \eqref{L4TF} and \eqref{EF}. In particular, taking $\sigma \le k - 1$ we can apply the identity
\begin{equation}\label{EWS} % equation with sigma
\tan\left(\frac{2^{k - 1}}{\alpha_k}\right) = \tan\left(2^\sigma\,\frac{2^{k - 1 - \sigma}}{\alpha_k}\right) = \eta_\sigma\left(\tan\left(\frac{2^{k - 1 - \sigma}}{\alpha_k}\right)\right).
\end{equation}

The fol1lowing is the Mathematica code where the identity \eqref{EWS} is implemented by using the equations \eqref{L4TF} and \eqref{EF} at $\sigma = 100$ and $k=7323$:
\small
\begin{verbatim}
Clear[tanF]

(* Computing tangent *)
tanF[n_,x_,k_]:=Module[{p=0,q=0,m=1,x0=N[x,k]},xSq=x0^2;
  While[m<=n,
    r=(-1)^(m-1)/(2*m-1)!*x0;
    p=p+r;
    q=q+2^(2*m-1)*r;
    m++;x0=x0*xSq];
    (2*p^2)/q];

(* Case k = k0 = 7323 *)
n=1;
\[Sigma]=100;
While[n<=10,numb=tanF[n,2^(k0-1-\[Sigma])/\[Alpha][k0],k0]; 
  Print["n = ",n,": ",MantissaExponent[\[Pi]-
  4*(2^(k0-1)/\[Alpha][k0]+ 
  1/2*(1-\[Eta][\[Sigma],numb,k0]))][[2]]//Abs,
  " digits of \[Pi]"];n++];
\end{verbatim}
\normalsize
This code returns the following output:
\small
\begin{verbatim}
n = 1: 60 digits of π
n = 2: 121 digits of π
n = 3: 182 digits of π
n = 4: 244 digits of π
n = 5: 306 digits of π
n = 6: 368 digits of π
n = 7: 430 digits of π
n = 8: 492 digits of π
n = 9: 554 digits of π
n = 10: 617 digits of π
\end{verbatim}
\normalsize
This output shows high convergence rate. Specifically, we can see that each increment $n$ in equation \eqref{L4TF} contributes for more than $60$ digits of $\pi$. Generally, this convergence rate has no upper limitation. However, in order to increase $\sigma$ in equation \eqref{EWS}, we have to increase the value of the integer $k$.

Although this convergence rate is significantly higher than those in some modern algorithms for computing digits of $\pi$ \cite{Agarwal2013}, even without nested square roots of 2 the proposed method at large values $k$ still requires a powerful computer to compute the coefficients $\alpha_k$ by using the binary form of the $1/\pi$.

The presence of the multiplier $2^{k - 1 - \sigma}$ in equation \eqref{EWS} is continuously divisible by $2$. Therefore, its application may be advantageous for more rapid computation since each multiplication by $2$ implies just a binary shift of the mantissa.

The Mathematica code below shows the values of $\alpha_k$ at given $k$:
\small
\begin{verbatim}
(*Heading*)
Print["-----------------"];
Print["k",sps[5],"| \[Alpha][k]"];
Print["-----------------"];

k=2;
While[k<=25,

  (*Aligned output" *)
  Print[cnv2str[k,5], " | ", \[Alpha][k]];
  k++];
\end{verbatim}
\normalsize
This code returns the following output:
\small
\begin{alltt}
-----------------
k     | α\(\sb{k}\)
-----------------
2     | 2
3     | 5
4     | 10
5     | 20
6     | 40
7     | 81
8     | 162
9     | 325
10    | 651
11    | 1303
12    | 2607
13    | 5215
14    | 10430
15    | 20860
16    | 41721
17    | 83443
18    | 166886
19    | 333772
20    | 667544
21    | 1335088
22    | 2670176
23    | 5340353
24    | 10680707
25    | 21361414
\end{alltt}
\normalsize
As we can see, all numbers $\alpha_{k}$ are the same as those reported in \cite{WolframCloud}.

Since the constants $\alpha_k$ have been computed already, we can use them to validate the formula \eqref{BFF4P} for $1/\pi$ in the binary form. The Mathematica code below:
\small
\begin{verbatim}
f[K_]:=N[Sum[(1/10^(k+1))*Mod[\[Alpha][k],2],{k,1,K}],K]; 

Print["-------------------------------------------------------------"]
Print["k",sps[2]," | Binary output"];
Print["-------------------------------------------------------------"]

k := 1; 
While[k<=5,Print[10*k,sps[2],"| ",Subscript[f[10*k],2]];k++]

Print["-------------------------------------------------------------"]

Print["Built-in Mathematica:"]
Print["1/\[Pi]=",
  Subscript[StringJoin["[",
    StringSplit[ToString[BaseForm[N[1/Pi,15],2]]][[1]], "...]"],
	    2]];
\end{verbatim}
\normalsize
returns the output:
\small
\begin{alltt}
-------------------------------------------------------------
k  | Binary output
-------------------------------------------------------------
10 | 0.01010001011\(\sb{2}\)
20 | 0.010100010111110011000\(\sb{2}\)
30 | 0.0101000101111100110000011011011\(\sb{2}\)
40 | 0.01010001011111001100000110110111001001110\(\sb{2}\)
50 | 0.010100010111110011000001101101110010011100100010000\(\sb{2}\)
-------------------------------------------------------------
Built-in Mathematica:
\(1/\pi\) = [0.010100010111110011000001101101110010011100100010000...]\(\sb{2}\)
\end{alltt}
\normalsize
according to the equation \eqref{BFF4P}. The original binary representation of the number $1/\pi$ generated by built-in Mathematica is also shown for comparison (see also \cite{OEIS2007} for binary sequence for $1/\pi$).

\section{Conclusion}

We consider the properties of the two-term Machin-like formula for $\pi$ and propose its two-term rational approximation \eqref{TTRA}. Using this approach, we developed an efficient algorithm for computing digits of $\pi$ with squared convergence. The constants $\alpha_{k}$ in this approximation are computed without nested square roots of $2$.

\section*{Acknowledgment}

This work was supported by National Research Council Canada, Thoth Technology Inc., York University and Epic College of Technology.

\end{document}